\title{A Tight Upper Bound on Acquaintance Time of Graphs}
\author{
Omer Angel\thanks{
{\tt angel@math.ubc.ca},
Department of Mathematics, University of British Columbia.}
\and
Igor Shinkar\thanks{
    {\tt igor.shinkar@weizmann.ac.il}
    Department of Computer Science and Applied Mathematics,
    Weizmann Institute of Science, Rehovot, {\sc Israel}.}
}
  \crefname{theorem}{Theorem}{Theorems}
  \crefname{thm}{Theorem}{Theorems}
  \crefname{lemma}{Lemma}{Lemmas}
  \crefname{lem}{Lemma}{Lemmas}
  \crefname{remark}{Remark}{Remarks}
  \crefname{prop}{Proposition}{Propositions}
  \crefname{defn}{Definition}{Definitions}
  \crefname{corollary}{Corollary}{Corollaries}
  \crefname{section}{Section}{Sections}
  \crefname{figure}{Figure}{Figures}
\newtheorem{thm}{Theorem}[section]
\newtheorem{corollary}[thm]{Corollary}
\numberwithin{equation}{section}
\newcommand{\N}{\mathbb N}
\newcommand{\AC}{\mathcal{AC}}
\newcommand{\floor}[1]{{\lfloor#1\rfloor}}
\newcommand{\ceil}[1]{{\lceil#1\rceil}}
\begin{document}

\maketitle
\begin{abstract}
  In this note we confirm a conjecture raised by Benjamini
  et~al.~\cite{BST} on the acquaintance time of graphs, proving that for
  all graphs $G$ with $n$ vertices it holds that $\AC(G) = O(n^{3/2})$,
  which is tight up to a multiplicative constant.  This is done by proving
  that for all graphs $G$ with $n$ vertices and maximal degree $\Delta$ it
  holds that $\AC(G) \leq 20 \Delta n$.  Combining this with the
  bound $\AC(G) \leq O(n^2/\Delta)$ from \cite{BST} gives the foregoing
  uniform upper bound of all $n$-vertex graphs.

  We also prove that for the $n$-vertex path $P_n$ it holds that
  $\AC(P_n)=n-2$. In addition we show that the barbell graph $B_n$ consisting
  of two cliques of sizes $\ceil{n/2}$ and $\floor{n/2}$ connected by a single
  edge also has $\AC(B_n) = n-2$. This shows that it is possible to add
  $\Omega(n^2)$ edges to $P_n$ without changing the $\AC$ value of the graph.
\end{abstract}

\section{Introduction}\label{sec:intro}

In this note we study the following graph process, recently introduced
by Benjamini et~al.\ in \cite{BST}. Let $G = (V,E)$ be a finite connected graph.
Initially we place one agent in each vertex of the graph.
Every pair of agents sharing a common edge are declared to be acquainted.
In each round we choose some matching of $G$ (not necessarily a maximal matching),
and for each edge in the matching the agents on this edge swap places,
which allows more agents to become acquainted.
A sequence of matchings that allows all agents to meet
is called {\em a strategy for acquaintance in $G$}.
The \emph{acquaintance time of $G$}, denoted by $\AC(G)$,
is the minimal number of rounds in a strategy for acquaintance in $G$.

It is trivial that for an $n$ vertex graph $G=(V,E)$ it holds that
$\AC(G) \leq O(n^2)$ since every agent can meet all others by
traversing the graph along some spanning tree in at most $2n$ rounds.
Benjamini~et~al.~\cite{BST} proved an asymptotically smaller upper bound
of $\AC(G) = O(n^2 \cdot \log\log(n)/ \log(n))$ for all graphs with $n$ vertices.
This bound has been then improved by Kinnersley~et~al.~\cite{KMP} to
$\AC(G) = O(n^2 / \log(n))$.
In this note we prove that $\AC(G) = O(n^{1.5})$
for all graphs $G$ with $n$ vertices, which is tight up
to a multiplicative constant. Indeed, by Theorem 5.1 in \cite{BST}
for every function $f:\N \to \N$ that satisfies $1 \leq f(n) \leq n^{1.5}$
there is a family of graphs $\{G_n\}_{n \in \N}$ such that $G_n$
has $n$ vertices and $\AC(G_n) = \Theta(f(n))$.

We also prove that for $P_n$, an $n$-vertex path, we have $\AC(P_n) = n-2$.
For the upper bound we show a $(n-2)$-rounds strategy for acquaintance in
$P_n$. For the lower bound we prove that the barbell graph $B_n$ consisting
of two cliques of sizes $\ceil{n/2}$ and $\floor{n/2}$ connected by a single
edge satisfies $\AC(B_n) = n - 2$. This shows that it is possible to add
$\Omega(n^2)$ edges to $P_n$ without changing the $\AC$ value of the graph.

\section{Upper Bound on Acquaintance Time of Graphs}\label{sec:main}

The following theorem is the main result of this paper.

\begin{thm}\label{thm:main}
  Let $G=(V,E)$ be a graph with $n$ vertices, and suppose that the maximal
  degree of $G$ is $\Delta$. Then $\AC(G) \leq 20 \Delta n$.
\end{thm}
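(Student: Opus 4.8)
The plan is to build an acquaintance strategy in two conceptual layers. First, pick a spanning tree $T$ of $G$; traversing $T$ takes $O(n)$ rounds but only makes agents in $G$ meet along tree edges — that is too slow if done naively for every pair. Instead, the idea is to exploit the degree bound: partition or route the agents so that in each of $O(n)$ ``super-rounds'' we arrange a large batch of agents into positions where they get acquainted, and argue that $\Delta$ super-rounds suffice to cover all pairs. Concretely, I would try a recursive/divide-and-conquer scheme: find a small balanced separator structure (using the fact that a bounded-degree graph has controllable expansion via spanning-tree centroid decomposition), route all agents across a central vertex or small central set, have everyone get acquainted with the separator's occupants, then recurse on the pieces. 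The $\Delta$ factor should enter because the ``width'' of the pipeline through any vertex is limited by its degree, so it takes $\sim\Delta$ passes to push $\sim\Delta n$ agent-meetings through it.

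The cleaner approach, which I would pursue first, is a \textbf{rotation-based} strategy on a spanning tree. Take an Euler-tour ordering $v_1,\dots,v_{2n}$ of a spanning tree $T$, which gives a closed walk of length $2n$ visiting every vertex. One can implement a ``conveyor belt'': in $O(n)$ rounds every agent can be cyclically shifted one step along this walk, using matchings along tree edges (a standard token-sliding argument — shifting all tokens one position on a cycle embedded in a tree costs $O(\mathrm{length})$ rounds because each tree edge is used $O(1)$ times per shift). After $2n$ such shifts every agent has occupied every vertex of the walk, but that alone does not make all pairs meet — two agents moving in lockstep never become adjacent to each other. The fix is to run \emph{two} conveyor belts at different speeds, or to periodically ``freeze'' a growing prefix: after the $k$-th full rotation, park $\Theta(n/\Delta)$ agents and let the remaining moving agents sweep past them. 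Since at most $\Delta$ agents sit within distance... — more carefully, since every parked agent meets all $\Delta$ of its eventual neighbors' worth of sweeping traffic, $\Delta$ rounds of freezing handle $\Theta(n)$ agents each, for a total of $O(\Delta n)$ rounds, and the constant can be pushed to $20$ with care.

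The main obstacle is the lockstep problem: ensuring that agents which are supposed to meet actually end up on the same edge at the same time, rather than all drifting together. I expect the heart of the argument to be a scheduling lemma stating that if we designate a set $S$ of ``stationary'' agents occupying a connected subtree and let the complement stream around a fixed cyclic walk, then within $O(n)$ rounds every streaming agent is adjacent to every member of $S$ whose tree-neighbor the streamer passes — and since each vertex has $\le \Delta$ tree-neighbors, choosing the $S$-subtrees so that each vertex of $G$ lies adjacent to a bounded number of them across $\Delta$ phases gives full coverage. Getting the bookkeeping to yield the explicit constant $20$ (rather than just $O(\Delta n)$) will require being slightly clever about reusing the same motion for many $(S, \text{complement})$ pairs simultaneously, e.g.\ by using $\Delta$ disjoint color classes of an edge-coloring of $G$ so that each ``phase'' handles one color class and within a phase the relevant agents are already neighbors in $G$ after an appropriate constant number of alignment rounds. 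I would also invoke $\AC(G) \le O(n^2/\Delta)$ from \cite{BST} only at the very end, to balance against $20\Delta n$ and extract the $O(n^{3/2})$ corollary.
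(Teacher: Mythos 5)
Your starting point (restrict to a spanning tree, take an Euler tour/contour of length $O(n)$ that visits each vertex at most $\Delta$ times) matches the paper's setup, but from there the argument has a genuine gap: you correctly identify the lockstep problem as the crux and then do not solve it. The ``conveyor belt with freezing'' scheme is not established. Several of its steps would need real proofs and look problematic: a cyclic shift along an Euler tour is not a shift along a genuine cycle of $G$ --- each tree edge appears twice in the tour, so the transpositions implementing one shift conflict with each other, and the claimed $O(1)$ cost per edge per shift needs the same kind of conflict-resolution argument you never supply; parked agents must still meet \emph{each other}, which your sweeping phases do not address (this forces a recursion or extra phases whose cost you have not accounted for); and the accounting ``$\Delta$ rounds of freezing handle $\Theta(n)$ agents each'' does not parse into a bound --- each sweep is itself $\Omega(n)$ rounds, and it is not argued that a streaming agent actually becomes \emph{adjacent} to a parked agent (being adjacent in the tour ordering is not the same as being adjacent in $G$ unless you control the gaps, which you don't). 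In short, no step of the proposal certifies that every pair of agents meets.

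The paper closes exactly this gap with two concrete devices you are missing. First, instead of a conveyor belt it emulates the odd-even transposition (``brick wall'') strategy on a path: alternately swapping all odd-indexed and all even-indexed edges reverses the order of $n$ agents in $n$ rounds, so every pair provably swaps places and hence meets --- this is the mechanism that kills the lockstep issue outright. Second, to run this on the tree it marks one representative of each vertex of $G$ on the contour (first visit at even depth, last visit at odd depth) so that consecutive marked vertices are at most $3$ apart along the contour; each path-swap then costs at most $5$ tree moves, and conflicts between swap intervals that project to overlapping vertex sets are handled by coloring the intervals with $4\Delta$ colors (each vertex lies in at most $\Delta$ intervals, each interval has at most $4$ vertices), yielding $20\Delta$ sub-rounds per path round and the bound $20\Delta n$. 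Your edge-coloring intuition gestures at this last step, but without the reversal strategy and the gap-$3$ marking there is nothing for the coloring to schedule.
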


\begin{proof}
  Clearly, removing edges from $G$ can only increase its acquaintance time.
  Thus, in order to upper bound $\AC(G)$ we may fix a spanning tree of $G$
  and use only the edges of the tree, and so, we henceforth assume that $G$
  is an $n$-vertex tree. A contour of the tree is a cycle
  that crosses each edge exactly twice, and visits each vertex $v$ a number
  of times equal to its degree.  Such a contour is obtained by considering
  a DFS walk on $G$ (see \cref{fig:contour}). We remove an edge from the
  contour to get a path $\Gamma$ in $G$ of length $2n-3$, that visits every
  vertex at most $\Delta$ times.

  \begin{figure}[htbp]
    \centering
    \includegraphics[height=80mm]{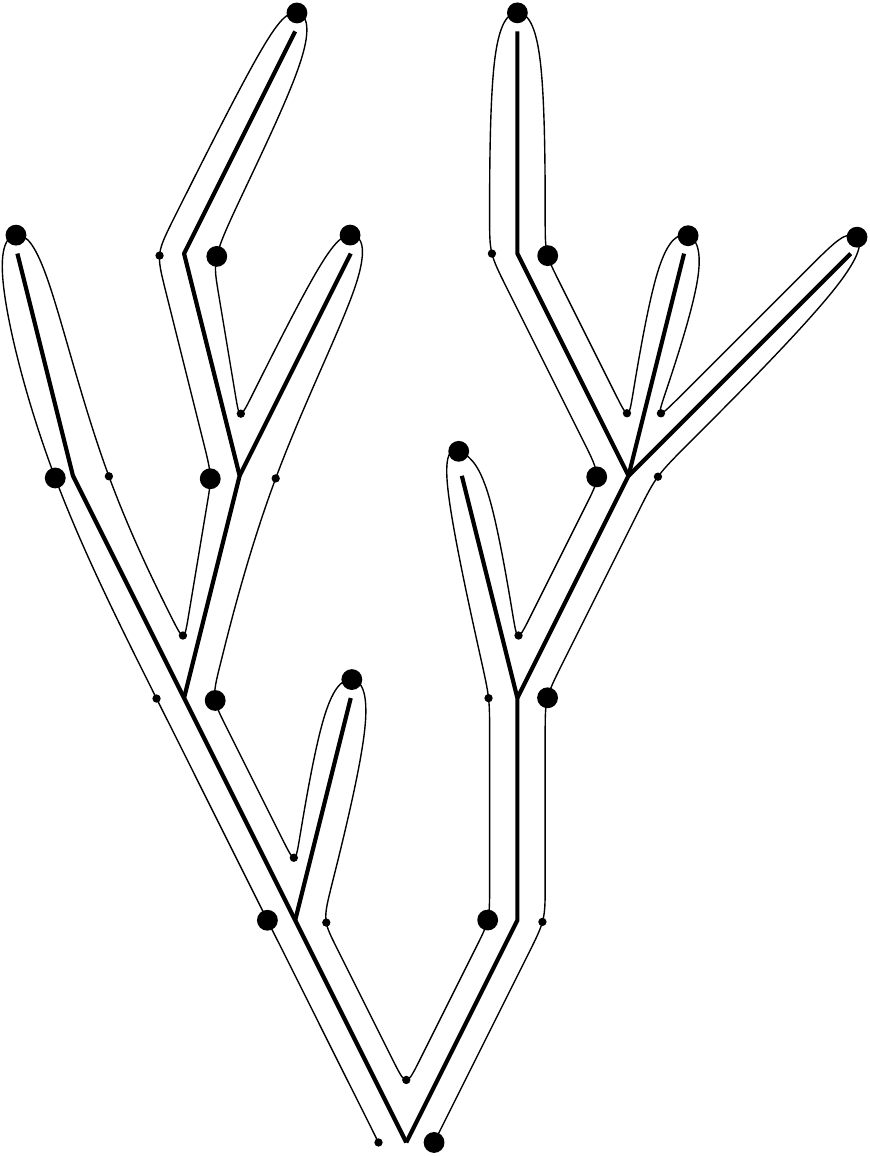}
    \caption{A tree with a marked contour.}
    \label{fig:contour}
  \end{figure}

  Let $\pi$ be the projection from $\Gamma$ to $G$.
  We first argue that it is possible to choose $n$ vertices on the path
  $\Gamma$ that project to distinct (and hence all) vertices of $G$, so
  that the gaps between the chosen consecutive vertices along $\Gamma$
  are at most $3$. To do this, we need to pick one vertex of $\Gamma$ from
  $\pi^{-1}(x)$ for each $x\in G$.  Fix a root for the tree at which
  $\Gamma$ starts.  In a vertex $x$ in an even level we pick the first
  vertex of $\Gamma$ projecting to $x$.  For $x$ in an odd level we pick
  the last one. See \cref{fig:contour} for an example.

  Note that $\Gamma$ only visits leaves of the tree once. Between leaves
  the contour descends some levels towards the root, and then ascends.
  Along the descent vertices are visited for the last time, and so every
  other vertex is selected.  Along the ascent vertices are visited for the
  first time.  Consequently, it is not possible to have more than three
  steps of $\Gamma$ between consecutive marked vertices.

  Consider the following $n$-rounds strategy. In
  even rounds we swap the edges $\{(i,i+1) : i \mbox{ even}\}$, and in odd rounds we swap
  the edges $\{(i,i+1) : i \mbox{ odd}\}$. It is easy to see that after $n$ rounds the agents
  are in reversed order on the path, and so every pair of agents must have swapped places.

  The $n$ agents on the vertices of $G$ can be seen as being on the vertices
  of $\Gamma$, where we use the marks specified above to decide
  which vertex of $\Gamma$ is occupied.  In order to present a $O(\Delta
  \cdot n)$-rounds strategy for acquaintance in $G$ we emulate the strategy
  for the path $\Gamma$, except that our goal is to make the $n$ agents
  located in the marked vertices of $\Gamma$ swap places, and hence meet.
  This is done by simulating each round of the strategy for $\Gamma$ by a
  sequence of at most $20\Delta$ matchings.

  In order to swap a consecutive pair of agents $p_i$ and $p_j$ in vertices
  $i$ and $j$ we can perform a sequence of swaps in $\Gamma$, namely $(i,i+1),
  \dots, (j-1,j)$, which brings the agent $p_i$ to the vertex $j$, followed
  by the sequence $(j-1,j-2), \dots, (i+1,i)$, bringing the agent $p_j$ to
  the vertex $i$.  This projects to swaps on $G$ that exchange the
  agents at $\pi(i)$ and $\pi(j)$ and leaves all others unchanged. The
  gaps between consecutive agents are at most $3$ so it takes at most $5$
  steps on $G$ to perform such a swap.

  The difficulty is that swapping between a pair of agents $p_i$ and $p_j$
  could interfere with swapping another pair $p_{i'}$ and $p_{j'}$, which can
  happens if the projections of the intervals $[i,j]$ and $[i',j']$ in the
  path $\Gamma$ intersect in $G$. If not for this problem, we would have
  a $5n$ round acquaintance strategy for $G$.

  In order to solve this problem, we shall separate each round into several
  sub-rounds, so that conflicting pairs are in different sub-rounds.  Since
  $\Gamma$ visits each vertex of $G$ at most $\Delta$ times, and since the
  intervals $[i,j]$ of $\Gamma$ that we care about are disjoint, each
  vertex of $G$ is contained in at most $\Delta$ such intervals.  Each
  interval consists of at most $4$ vertices of $G$, and therefore each pair
  $[i,j]$ is in conflict with less than $4\Delta$ other pairs $[i',j']$.
  We can assign each pair one of $4\Delta$ colors, so that conflicting
  pairs have different colors.  We now split the round into $20\Delta$
  sub-rounds where in $5$ consecutive sub-rounds we swap all pairs of
  color $i$ that are to be swapped in that round of the path strategy.

  Each round of the strategy on $P_n$ can be simulated by $20\Delta$ rounds
  on $G$, and hence $\AC(G) \leq 20 \Delta n$.
  This completes the proof of the theorem.
\end{proof}

As an immediate corollary from \cref{thm:main} we obtain the following
uniform upper bound on the acquaintance time of graph with $n$ vertices.

\begin{corollary}\label{cor:uniform bound}
  For all $n$-vertex graphs $G$ it holds that $\AC(G) = O(n^{3/2})$.
\end{corollary}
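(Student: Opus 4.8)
The plan is simply to balance the two upper bounds on $\AC(G)$ that are now available, optimizing over the maximal degree $\Delta$ of the input graph. On one side, \cref{thm:main} gives $\AC(G) \le 20\Delta n$, which is strong precisely when $\Delta$ is small. On the other side, \cite{BST} supplies the complementary bound $\AC(G) = O(n^2/\Delta)$, which is strong precisely when $\Delta$ is large. Writing $C$ for the implied constant in the latter bound, we have, for every $n$-vertex graph $G$ of maximal degree $\Delta$,
\[
  \AC(G) \le \min\Bigl\{\, 20\Delta n,\ \tfrac{C n^2}{\Delta}\,\Bigr\}.
\]

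First I would note that, as a function of $\Delta$, the first bound is increasing and the second is decreasing, so their minimum is largest at the crossover point $20\Delta n = C n^2/\Delta$, i.e.\ at $\Delta = \sqrt{Cn/20}$, where the common value is $\sqrt{20C}\, n^{3/2}$. Concretely one can argue by cases on the threshold $\Delta = \sqrt{n}$: if $\Delta \le \sqrt{n}$ then \cref{thm:main} yields $\AC(G) \le 20\Delta n \le 20\, n^{3/2}$; if $\Delta > \sqrt{n}$ then the bound of \cite{BST} yields $\AC(G) \le C n^2/\Delta < C\, n^{3/2}$. In either case $\AC(G) \le \max\{20, C\}\, n^{3/2} = O(n^{3/2})$, which is the claim.

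There is essentially no obstacle here beyond one bookkeeping point that should be made explicit: that the bound $\AC(G)=O(n^2/\Delta)$ quoted from \cite{BST} holds uniformly over all $n$-vertex graphs of maximal degree $\Delta$, with a single universal implied constant $C$ independent of $G$, $n$, and $\Delta$. Granting that, the case analysis above goes through verbatim and the corollary follows immediately from \cref{thm:main}. (One could also remark, for context, that the exponent $3/2$ cannot be improved, by the family of examples from Theorem~5.1 of \cite{BST} cited in the introduction, but this is not needed for the upper bound itself.)
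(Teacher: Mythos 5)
Your proof is correct and follows essentially the same route as the paper: both combine the $20\Delta n$ bound of \cref{thm:main} with the $O(n^2/\Delta)$ bound from \cite{BST} and take the minimum, which is $O(n^{3/2})$ at the crossover $\Delta \approx \sqrt{n}$. Your version merely spells out the case analysis and the uniformity of the implied constant, which the paper leaves implicit.
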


\begin{proof}
  We have that $\AC(G) \leq \min(O(n^2/\Delta),O(n\Delta)) \leq O(n^{3/2})$,
  where the two bounds are from \cref{thm:main} and Claim~5.7 of~\cite{BST}.
\end{proof}

Note that if $G$ is not a tree then we can try to improve our bound by
finding a spanning tree with smaller degrees.  For example, the giant
component of $G(n,p)$ with $p=c/n$ has maximal degree of order
$\frac{\log n}{\log\log n}$, but has a spanning tree with bounded degrees,
and so has acquaintance time of order $n$.

\section{Exact calculation of $\AC(P_n)$ and $\AC(B_n)$}
\label{sec:P_n}

In this section we compute $\AC(P_n)$ the acquaintance time of the
$n$-vertex path.

\begin{thm}\label{thm:P_n}
  Let $P_n$ be a path with $n$ vertices, and let $B_n$ be the barbell graph
  consisting of cliques of sizes $\ceil{n/2}$ and $\floor{n/2}$ connected
  by a single edge.  Then
  \[
  \AC(P_n) = \AC(B_n) = n-2.
  \]
\end{thm}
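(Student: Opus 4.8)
The plan is to establish four separate bounds: $\AC(P_n) \le n-2$, $\AC(B_n) \le n-2$, $\AC(P_n) \ge n-2$, and $\AC(B_n) \ge n-2$. Actually, since removing the clique edges of $B_n$ (keeping a Hamiltonian path through both cliques) turns $B_n$ into $P_n$, monotonicity of $\AC$ under edge removal gives $\AC(P_n) \ge \AC(B_n)$, so it suffices to prove $\AC(P_n) \le n-2$ and $\AC(B_n) \ge n-2$; the remaining two inequalities follow for free.

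\textbf{Upper bound $\AC(P_n)\le n-2$.} Label the vertices $1,\dots,n$. The naive "odd/even transposition" sort reverses the permutation in $n$ rounds, but we only need every pair to \emph{meet}, not to fully reverse. I would run the brick-wall sorting network but observe that the two endpoint agents need not be moved in the final rounds: after round $n-2$ of the odd-even transposition sort, the agent that started at vertex $1$ has reached vertex $n-1$ and the agent that started at vertex $n$ has reached vertex $2$, and in fact every pair of agents has already shared an edge. The cleanest way to see this is to track, for agents starting at positions $a<b$, the first round in which their positions become adjacent; a direct induction (or the standard "0-1 principle / zig-zag" picture for the odd-even transposition network) shows this happens by round $a + (n-b) \le n-2$. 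I would spell this out by following the trajectories: in odd-even transposition sort each agent moves monotonically toward its final position one step every other round once it "starts moving," and two agents meet exactly when their trajectories cross.

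\textbf{Lower bound $\AC(B_n)\ge n-2$.} This is the crux. Write $K = \ceil{n/2}$, $K' = \floor{n/2}$, and let $e = \{u,v\}$ be the bridge, $u$ in the big clique, $v$ in the small one. The key structural fact: in $B_n$, two agents can become acquainted in a given round only if (i) they are currently in the same clique, or (ii) one of them is at $u$ and the other at $v$. So the only way an agent originating in one clique meets an agent originating in the other is for at least one of them to traverse the bridge. Consider the set $S$ of $K'$ agents that start in the small clique. Each such agent $s$ must, at some point, meet all $K$ agents of the big clique; this forces $s$ to visit the big clique, which requires crossing $e$. But $e$ is a single edge used by at most one agent-pair per round, and — more importantly — an agent that crosses into the big clique and wants to come back (to meet agents still in the small clique, or because the small clique's agents also need to cross) must cross $e$ again. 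I would set up a potential/counting argument: define $\phi_t$ = the number of (unordered) cross pairs $\{a,b\}$ with $a$ originating small, $b$ originating big, that have \emph{not yet met} by time $t$. Initially $\phi_0 = K K' - (\text{pairs acquainted via the initial bridge configuration})$, which is $K K' - 1$ or so. A single round changes the location of agents only by swapping along a matching; I would argue that in one round the number of newly-met cross pairs is bounded by (number of agents at $u$ or $v$ after the swap) $\times$ (something), ultimately showing at most roughly $\max(K,K')$ cross pairs get resolved per round near the bridge — but that alone gives only $\approx K K'/\max(K,K') = \min(K,K') \approx n/2$ rounds, which is too weak by a factor of $2$.

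\textbf{Sharpening to $n-2$.} To get the exact bound I expect the argument must be bootstrapped with the \emph{within-clique} constraint simultaneously: the $\binom{K}{2}$ pairs inside the big clique also all need to meet, and every time a big-clique agent leaves for the small clique it is "unavailable" to meet the big-clique agents it hasn't met yet. The right invariant is likely a single monotone quantity — for instance, track the position of a distinguished agent or the "diameter" of the occupied configuration, analogous to the $P_n$ analysis, exploiting that $B_n$ contains an induced path of length $n-1$ (pick one vertex in the big clique $\to u \to v \to$ one vertex in the small clique... but that path has only $4$ vertices). Hmm — the genuinely load-bearing observation is probably: \emph{define, for each ordered pair of distinct agents, an indicator that they have met; the total is $\binom n2$; show each round increases it by strictly less than $n-1$ unless... }. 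I would look for a weighting of pairs (weight $1$ for within-clique pairs, weight $w$ for cross pairs) and a per-round increase bound that simultaneously forces $n-2$ rounds; alternatively, adapt whatever lower-bound technique \cite{BST} uses for paths, since $\AC(P_n)=n-2$ must itself have a matching lower bound and $B_n \supset P_n$ makes the $B_n$ bound the stronger, non-obvious claim. \textbf{The main obstacle} is exactly this last step: a crude bridge-counting bound loses a factor of $2$, and closing it requires coupling the cross-clique acquaintance deficit with the intra-clique deficit in a single potential function, or finding the precise extremal configuration/invariant (likely: the number of agents that have "ever crossed" the bridge is at most one more than the number of elapsed rounds, combined with the fact that each of the $K'$ small-side agents and each of the $K$ big-side agents must cross, giving $\ge K + K' - 2 = n-2$ crossings hence rounds).
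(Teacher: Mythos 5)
Your upper bound half is essentially the paper's argument: run odd--even transposition sort on $P_n$ and observe that every pair of agents has already met two rounds before the full reversal completes. You would still need to carry out the case analysis (the paper splits into three cases by the parities of the two starting positions and exhibits the meeting round explicitly), but the idea is the same and it works. Your reduction — $P_n$ is a spanning subgraph of $B_n$, so $\AC(P_n)\ge\AC(B_n)$ and it suffices to show $\AC(P_n)\le n-2$ and $\AC(B_n)\ge n-2$ — is exactly the paper's plan.

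The lower bound is where you have a genuine gap, and you flag it yourself. Your closing guess — that each of the $K+K'$ agents must cross the bridge, forcing $K+K'-2=n-2$ crossings and hence rounds — is false: two agents occupying the two endpoints of the bridge are adjacent and become acquainted without either of them ever crossing, so no individual agent is forced to cross. The paper instead trades off against the number $k$ of bridge swaps. An agent that swaps across the bridge is adjacent to all of its old clique just before the swap and all of its new clique just after, hence is acquainted with everyone; there are at most $2k$ such good agents, and the bad agents never change cliques, so at least $\ceil{n/2}-k$ and $\floor{n/2}-k$ bad agents remain on their respective sides. Each bad--bad cross pair can only meet by simultaneously occupying the two bridge endpoints, and at most one such pair does so in any configuration, so these pairs consume $(\ceil{n/2}-k)(\floor{n/2}-k)$ distinct configurations, in addition to the $2k$ configurations with good agents flanking the bridge. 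Hence $m+1\ge 2k+(\ceil{n/2}-k)(\floor{n/2}-k)$, and minimizing the right-hand side over $k$ gives $m\ge n-2$. This is exactly the coupling of the cross-clique deficit with bridge usage that you were searching for, but realized as a product (one configuration per unmet cross pair of bad agents) rather than the per-round rate bound you attempted — which is why it does not lose the factor of $2$.
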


\begin{proof}
  We first prove that $\AC(P_n) \leq n-2$ by describing a $(n-2)$-rounds
  strategy for acquaintance in $P_n$.  Then we prove that
  $\AC(B_n) \geq n-2$.  This is clearly enough for the proof of
  the theorem as $P_n$ is contained in $B_n$.

  In order to prove that $\AC(P_n) \leq n-2$ consider the strategy that in
  odd-numbered rounds flips all edges $\{(i,i+1) : i \mbox{ odd}\}$, and in
  the even-numbered rounds swaps all edges $\{(i,i+1) : i \mbox{ even}\}$.
  Consider the walk performed by an agent that begins in some odd-indexed
  vertex under this strategy.  The agent will move one step up in each
  round until reaching the vertex $n$, will stay there for one round, and
  then move down one step in each round.  Similarly, an agent starting at
  an even vertex will move down until reaching the vertex $1$, stay there
  for one round and the move up.

  After $n$ rounds, the agent who started in position $i$ is in position
  $n+1-i$, and in particular every pair of agents have already met.
  We claim that in fact all agents are acquainted two rounds earlier.
  Indeed, consider two agents $p_i$ and $p_j$ who started in non-adjacent
    the vertices $i \leq j-2$ respectively. The proof follows by considering
    the following 3 cases.
    \begin{enumerate}
    \item{\bf $|i-j|$ is even:}
        Assume for concreteness that $i$ and $j$ are odd.
        (The case of $i$ and $j$ even is handled similarly)
        Then, $p_i$ meets $p_j$ in one of the first $n-i-1$ rounds since after
        the $(n-i-1)$'st rounds the agent $p_i$ reaches the vertex $n-1$.
    \item{\bf $i$ is odd and $j$ is even:}
        In this case the agents move towards each other, and hence meet in
        the $(j-i-2)$'nd round.
    \item{\bf $i$ is even and $j$ is odd:}
        Then, the agent $p_i$ reaches the vertex $1$ after $i-1$ rounds,
        stays there for another round, and then moves up.
        Therefore, in the $t$'th round the agent $p_i$ visits
        the vertex $t-i+1$ for all $i \leq t \leq n-2$.
        Analogously, for all $n-j < t \leq n-2$ the agent $p_j$ visits
        in $t$'th round the vertex $2n-(t+j-1)$.
        This implies that in round number $t = n - \frac{j-i+1}{2}$
        the agents $p_i$ and $p_j$ are located in neighboring vertices
        $n - \frac{i+j-1}{2}$ and $n - \frac{i+j-1}{2}+1$ respectively.
    \end{enumerate}
    This completes the proof of the first part of the proof, namely $\AC(P_n) \leq n-2$.

\medskip
    For the lower bound consider the barbell graph $B_n$ consisting of two
    disjoint cliques of sizes $\ceil{n/2}$ and $\floor{n/2}$ connected by a
    single edge, called the bridge. We claim that $\AC(B_n) \geq n-2$.

    Suppose there is an $m$-round strategy for acquaintance in $B_n$
    with $k$ swaps across the bridge.
    Any agent involved in such a swap is immediately
    acquainted with all others. Call these agents good.
    If the strategy has $k$ swaps, then $2k$ of the $m+1$
    configurations (those before and after the bridge-swaps)
    have good agents at both endpoints of the bridge.

    Note that a second consecutive swaps across the bridge achieves
    nothing, and also that there is also no point in swapping across
    edges not incident with the bridge.
    Hence, if there are $k$ swaps across the bridge, then the number of bad
    agents in the two cliques are at least $\ceil{n/2}-k$ and $\floor{n/2}-k$.
    These agents can only be acquainted by being by the bridge simultaneously,
    which requires at least $(\ceil{n/2}-k) \cdot (\floor{n/2}-k)$ configurations.
    Therefore, we get
    \[
        m+1 \geq 2k + (\ceil{n/2}-k)(\floor{n/2}-k) = k^2 -(n-2)k + \ceil{n/2}\floor{n/2}.
    \]
    This is minimized for $k = n/2-1$, giving a lower bound of
    $m+1 \geq n-1$ for even values of $n$, and $m+1 \geq n-5/4$ for odd $n$.
    This clearly suffices since $m$ is an integer.
\end{proof}



\end{document}